\title[A note on essentially left $\phi$-contractible Banach algebras]{A note on essentially left $\phi$-contractible Banach algebras}
 \author[A. Sahami]{Amir Sahami}
 \email{a.sahami@ilam.ac.ir}
 \address{Faculty of Basic sciences, Department of Mathematics, Ilam University, P.O.Box 69315-516, Ilam,
 	Iran.}
 \author[I. Almasi]{Isaac Almasi}
 \email{i.almasi@ilam.ac.ir}
 \address{Faculty of Basic sciences, Department of Mathematics, Ilam University, P.O.Box 69315-516, Ilam,
 	Iran.}
 \subjclass[2010] { Primary 46H05, 46H25, Secondary 43A20. }
\theoremstyle{plain}
\newtheorem{lem}{\textbf{Lemma}}[section]
\newtheorem{thm}[lem]{\textbf{Theorem}}
\theoremstyle{definition}
\theoremstyle{definition}
\theoremstyle{remark}
\theoremstyle{definition}
\theoremstyle{definition}
\theoremstyle{remark}
\begin{document}
\begin{large}

\maketitle
%\date{January 1, 2011}
%----------additions
%\dedicatory{To my boss}
%%% ----------------------------------------------------------------------

\begin{abstract}
\begin{normalsize}
In this note, we show that   \cite[Corollary 3.2]{sad} is not always true. In fact, we characterize essential  left $\phi$-contractibility of the the group algebras in the term of compactness of its related locally compact group. Also we show that for any compact commutative group $G$, $L^{2}(G)$ is always  essentially left $\phi$-contractible. We discuss essential left $\phi$-contractibility of some Fourier algebras.   
\end{normalsize}
\end{abstract}

\begin{normalsize}
   \textbf{Keywords}:   Group algebra, Essential left $\phi$-contractible, Banach algebra.
   \end{normalsize}
%%% ----------------------------------------------------------------------
%%% ----------------------------------------------------------------------
%\tableofcontents

\section{ \textbf{Introduction and preliminaries }}
Johnson introduced and studied the notion of amenability for Banach algebras. A Banach algebra $A$ is called amenable, if every continuous (bounded linear) derivation $D$ from $A$ into $X^*$  is inner, that is, $D$ has a form $$D(a)=a\cdot x_{0}-x_{0}\cdot a\quad (a\in A),$$ for some $x_{0}\in X^*$, where $X$ is a Banach $A$-bimodule. For the history of amenability of Banach algebras, see \cite{run}.

Ghahramani and Loy in \cite{gen 1} defined a generalized notion of amenability for Banach algebras called  essential amenability, that is, every continuous derivation $D$ from $A$ into $X^*$  is inner, where $X$ is a neo-unital Banach $A$-bimodule ($X=A\cdot X\cdot A$).

Kanuith et. al. in \cite{kan} defined and investigated the notion of left $\phi$-amenability for a Banach algebra $A$, where $\phi$-is a non-zero multiplicative linear functional. Indeed a Banach algebra $A$ is left $\phi$-amenable if every derivation $D:A\rightarrow X^*$ is inner, where $X$ is a Banach $A$-bimodule with the left module action $a\cdot x=\phi(a)x$ for all $a\in A, x\in X$. It is known that for a locally compact group $G$,  the group algebra $L^{1}(G)$ is left $\phi$-amenable if and only if $G$ is amenable. Also the Fourier algebra $A(G)$ is always left $\phi$-amenable, see \cite{san}, \cite{kan}.

Motivated by these considerations Nasr-isfahani et. al. in \cite{nas essent} introduced  the concept of essential left $\phi$-amenability for Banach algebras. A Banach algebra $A$ is called essentially left $\phi$-amenable if every derivation $D:A\rightarrow X^*$ is inner, where $X$ is a neo-unital Banach $A$-bimodule with the left module action $a\cdot x=\phi(a)x$ for all $a\in A, x\in X$.   Nasr-isfahani et. al. studied some Banach algebras related to a locally compact groups under the concept of essential left $\phi$-amenability.

Recently R. Sadeghi Nahrkhalaji defined the concept of essential left $\phi$-contractible for Banach algebras.  A Banach algebra $A$ is called essentially left $\phi$-contractible if every continuous derivation $D:A\rightarrow X^*$ is inner, where $X$ is a neo-unital Banach $A$-bimodule with the right module action $x\cdot a=\phi(a)x$ for all $a\in A, x\in X$, see \cite{sad}. R. Sadeghi Nahrkhalaji studied the  essentially left $\phi$-contractibility of   some Banach algebras related to a locally compact group. Also some hereditary properties of this new notion are given in \cite{sad}.

 In this paper, we study essentially left $\phi$-contractibility of Banach algebras.we show that   \cite[Corollary 3.2]{sad} is not always true. In fact, we characterize essential  left $\phi$-contractibility of the the group algebras in the term of compactness of its related locally compact group. Also we show that for any compact commutative group $G$, $L^{2}(G)$ is always  essentially left $\phi$-contractible. We discuss essential left $\phi$-contractibility of some Fourier algebras.
 
 We give some  notations and definitions that we use
 in this paper frequently. Suppose that  $A$ is a Banach algebra. Throughout this manuscript,
 the character space of $A$ is denoted by $\Delta(A)$, that is, all
 non-zero multiplicative linear functionals (characters)  on $A$.
 
 The projective
 tensor product
 $A\otimes_{p}A$ is a Banach $A$-bimodule via the following actions
 $$a\cdot(b\otimes c)=ab\otimes c,~~~(b\otimes c)\cdot a=b\otimes
 ca\hspace{.5cm}(a, b, c\in A).$$ The product morphism $\pi_{A}:A\otimes_{p}A\rightarrow A$ is given by $\pi_{A}(a\otimes b)=ab,$ for every $a,b\in A.$
 Let $X$ and $Y$ be Banach $A-$bimodules. The map $T:X\rightarrow Y$ is called $A-$bimodule morphism, if
 $$T(a\cdot x)=a\cdot T(x),\quad T(x\cdot a)=T(x)\cdot a,\qquad (a\in A,x\in X).$$

\section{\textbf{Essential left $\phi$-contractibility}}

Note that the Cohen-Hewit factorization is valid, whenever the Banach algebra $A$  has a "bounded" left approximate identity, see \cite[Theorem 1.1.4, p2]{kan lau}. Then in \cite[Proposition 2.3]{sad} to show that $A\otimes_{p}A$ is neo-unital, $A$ must have a bounded approximate identity.
So we state the correct version of \cite[Proposition 2.3]{sad} here.

\begin{thm}
	Let $A$ be a Banach algebra with a bounded approximate identity and $\phi\in\Delta(A).$ Then $A$ is left $\phi$-contractible if and only if $A$ is essentially left $\phi$-contractible.
\end{thm}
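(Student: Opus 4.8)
The plan is to prove the two implications separately, the forward one being immediate and the reverse one carrying all the content, with the bounded approximate identity entering in an essential way exactly where the original statement went wrong.

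For the implication that left $\phi$-contractibility implies essential left $\phi$-contractibility, nothing is required beyond unwinding the definitions: a neo-unital Banach $A$-bimodule with right action $x\cdot a=\phi(a)x$ is in particular a Banach $A$-bimodule of the type occurring in the definition of left $\phi$-contractibility, so if every continuous derivation into every such module is inner, then in particular this holds for the neo-unital ones. This direction does not even use the approximate identity.

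For the converse I would reduce the whole of left $\phi$-contractibility to the innerness of a single explicit derivation into a single neo-unital module. Fix $a_{0}\in A$ with $\phi(a_{0})=1$, and regard $\ker\phi$ as a Banach $A$-bimodule with left action given by multiplication and right action $x\cdot a=\phi(a)x$; this is a bimodule, associativity of the two actions being immediate since the right action is scalar. Define $D\colon A\to\ker\phi$ by $D(a)=aa_{0}-\phi(a)a_{0}$; since $\phi(D(a))=\phi(a)\phi(a_{0})-\phi(a)=0$ the map lands in $\ker\phi$, and as the restriction of the inner derivation $\mathrm{ad}_{a_{0}}$ on $A$ it is a continuous derivation into $\ker\phi$. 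The key point is then to check that $\ker\phi$ is neo-unital. Since $\phi\ne 0$ the right action already gives $\ker\phi\cdot A=\ker\phi$ (take $a=a_{0}$, so $x\cdot a_{0}=x$). For the left action one uses the bounded approximate identity $(e_{\alpha})$ to write $x=\lim_{\alpha}e_{\alpha}x$ for $x\in\ker\phi$, with $e_{\alpha}x\in A\cdot\ker\phi$, so that $\ker\phi$ is an essential left Banach $A$-module; Cohen--Hewitt factorization, valid precisely because $A$ has a bounded approximate identity, then upgrades this to $A\cdot\ker\phi=\ker\phi$ and hence to $\ker\phi=A\cdot\ker\phi\cdot A$.

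Granting neo-unitality, essential left $\phi$-contractibility applies to $D$ and yields $\eta\in\ker\phi$ with $D(a)=a\eta-\phi(a)\eta$ for all $a\in A$. Setting $m:=a_{0}-\eta$ this reads $am=\phi(a)m$ for all $a$, together with $\phi(m)=\phi(a_{0})-\phi(\eta)=1$. I would then use this single element $m$ to conclude full left $\phi$-contractibility directly: for any Banach $A$-bimodule $X$ with $x\cdot a=\phi(a)x$ and any continuous derivation $\delta\colon A\to X$, expanding $\delta(am)$ by the Leibniz rule and invoking $am=\phi(a)m$ and $\phi(m)=1$ gives $\delta(a)=-(a\cdot\delta(m)-\phi(a)\delta(m))$, so $\delta=\mathrm{ad}_{-\delta(m)}$ is inner. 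The main obstacle is exactly the neo-unitality of $\ker\phi$: without a bounded approximate identity one cannot factor, $\ker\phi$ need not be essential, and the derivation $D$ cannot legitimately be fed into the essential hypothesis. This is precisely the gap that invalidated the original version and that the added hypothesis repairs.
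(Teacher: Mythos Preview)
Your argument is correct, and since the paper's own proof is simply a pointer to \cite[Proposition 2.3]{sad}, you have in fact supplied substantially more than the paper does. The route, however, differs from the one the paper invokes. From the paper's preamble to the theorem, the argument in \cite{sad} passes through the module $A\otimes_{p}A$: one needs $A\otimes_{p}A$ to be neo-unital, and it is precisely there that the bounded approximate identity and Cohen--Hewitt factorization are required. You instead work with the single module $\ker\phi$ (left action by multiplication, right action $x\cdot a=\phi(a)x$), produce the explicit derivation $D(a)=aa_{0}-\phi(a)a_{0}$ into it, verify neo-unitality of $\ker\phi$ via Cohen--Hewitt, and then read off directly the element $m=a_{0}-\eta$ satisfying $am=\phi(a)m$ and $\phi(m)=1$, finally using $m$ to show every relevant derivation is inner. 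Both approaches hinge on Cohen--Hewitt at exactly the same point, confirming your diagnosis of where the original proof needed the extra hypothesis. Your version is more elementary and self-contained, avoiding the projective tensor product and rederiving the element-level characterization of left $\phi$-contractibility from scratch; the tensor-product route, by contrast, connects the result to the diagonal machinery that underlies contractibility in general.
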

\begin{proof}
	See the proof of \cite[Proposition 2.3]{sad}.
\end{proof}

Let $G$ be a locally compact group and $L^{1}(G)$ be its associated group algebra. We denote $\widehat{G}$ for the dual group of $G$, that is, the set of all non-zero continuous homomorphisms $\rho$ from $G$ into $\mathbb{T}=\{z\in\mathbb{C}:|z|=1\}$. It is known that every non-zero multiplicative linear functional on $L^{1}(G)$ has the form $\phi_{\rho}$ for some $\rho\in\widehat{G},$ where $$\phi_{\rho}(f)=\int_{G} \overline{\rho(x)}f(x)dx,\quad f\in L^{1}(G),$$
where $dx$ is denoted for the  Haar measure.
For more information about the characters of  group algebra see \cite[Theorem 23.7]{hew}.

We should remind that a Banach algebra is left $\phi$-contractible if and only if there exists an element $m\in A$ such that $am=\phi(a)m$ and $\phi(m)=1$ for all $a\in A$. For knowing more about left $\phi$-contractibility of a Banach algebra and its hereditary properties through the homological approach, see \cite{Nas1}.
\begin{thm}
Let $G$ be a locally compact group. Then $L^{1}(G)$ is essentially $\phi$-contractible if and only if $G$ is compact.
\end{thm}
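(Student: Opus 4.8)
The plan is to prove both implications by reducing to the known characterization of left $\phi$-contractibility, exactly as Theorem 1.1 of this excerpt suggests when a bounded approximate identity is available. Since $L^1(G)$ always has a bounded approximate identity, Theorem 1.1 tells us that $L^1(G)$ is essentially left $\phi$-contractible if and only if it is left $\phi$-contractible. So the real content is: $L^1(G)$ is left $\phi$-contractible (for the character $\phi=\phi_\rho$ attached to some $\rho\in\widehat G$) if and only if $G$ is compact.

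For the "if" direction, assume $G$ is compact. Then the constant function $1$ (equivalently $\rho$ itself, up to normalization) lies in $L^1(G)$, and a short computation shows that $m:=\rho$ satisfies $f\ast m=\phi_\rho(f)\,m$ for all $f\in L^1(G)$ and $\phi_\rho(m)=1$ after normalizing Haar measure so that $G$ has measure $1$. Indeed $(f\ast\rho)(x)=\int_G f(y)\rho(y^{-1}x)\,dy=\rho(x)\int_G f(y)\overline{\rho(y)}\,dy=\phi_\rho(f)\rho(x)$, and $\phi_\rho(\rho)=\int_G\overline{\rho(x)}\rho(x)\,dx=\int_G 1\,dx=1$. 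By the criterion recalled just before the statement, $L^1(G)$ is left $\phi_\rho$-contractible, hence essentially left $\phi_\rho$-contractible.

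For the "only if" direction, suppose $L^1(G)$ is essentially left $\phi_\rho$-contractible, hence left $\phi_\rho$-contractible. This produces $m\in L^1(G)$ with $f\ast m=\phi_\rho(f)\,m$ for all $f$ and $\phi_\rho(m)=1$. I would translate the module equation into a concrete identity on $L^1(G)$: since $G$ has a bounded approximate identity $(e_\alpha)$ with $\phi_\rho(e_\alpha)\to 1$, the relation $e_\alpha\ast m=\phi_\rho(e_\alpha)m$ together with $e_\alpha\ast m\to m$ is automatic, so the force of the hypothesis is better extracted by testing against translations. Writing the equation for $f=L_a f_0$ (left translates) and exploiting that $\phi_\rho$ is a character, one shows $m$ must (up to the scalar $\rho$) behave like an invariant mean-type object forcing $1\in L^1(G)$, i.e.\ $G$ has finite Haar measure, i.e.\ $G$ is compact. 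Concretely, left $\phi_\rho$-contractibility of $L^1(G)$ is equivalent to left $\phi_e$-contractibility of $L^1(G)$ for the trivial character $e$ after twisting by $\rho$ (the map $f\mapsto \bar\rho f$ is an isometric algebra automorphism carrying $\phi_\rho$ to $\phi_{\mathbf 1}$), so it suffices to treat $\phi=\phi_{\mathbf 1}$, the augmentation character $f\mapsto\int_G f$; and $L^1(G)$ being left $\phi_{\mathbf 1}$-contractible is well known to be equivalent to $G$ being compact — one can cite this or reprove it by noting the defining element $m$ satisfies $\delta_x\ast m=m$ in measures for all $x$, hence $m$ is a left-invariant element of $L^1(G)$ with $\int_G m=1$, which forces Haar measure to be finite.

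The main obstacle is the "only if" direction: making rigorous the passage from the abstract module equation $f\ast m=\phi_\rho(f)m$ to the conclusion that Haar measure is finite. The cleanest route is the twisting reduction to the augmentation character followed by the observation that the contractibility element is genuinely translation-invariant in $L^1(G)$ — and the only way $L^1(G)$ can contain a nonzero translation-invariant function is for $G$ to be compact (otherwise translation-invariance plus integrability forces the function to be $0$ a.e.). I would present this invariance argument carefully, since it is the crux; everything else is either Theorem 1.1, the standard character description of $\Delta(L^1(G))$, or the elementary computation in the compact case.
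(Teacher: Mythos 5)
Your proposal is correct and follows the same overall strategy as the paper: since $L^{1}(G)$ has a bounded approximate identity, you reduce essential left $\phi_{\rho}$-contractibility to left $\phi_{\rho}$-contractibility via the paper's first theorem (the corrected form of \cite[Proposition 2.3]{sad}), and in the compact case you exhibit exactly the same witness $m=\rho\in L^{1}(G)$ with $f\ast\rho=\phi_{\rho}(f)\rho$ and $\phi_{\rho}(\rho)=1$ for normalized Haar measure. The only divergence is in the converse: the paper simply cites \cite[Theorem 3.3]{ala} for the fact that left $\phi_{\rho}$-contractibility of $L^{1}(G)$ forces $G$ to be compact, whereas you (besides noting one may cite this) sketch a self-contained argument: twist by the isometric algebra automorphism $f\mapsto\overline{\rho}f$, which carries $\phi_{\rho}$ to the augmentation character, and then observe that the contractibility element becomes a left-translation-invariant function in $L^{1}(G)$ with integral $1$, which is impossible when Haar measure is infinite (infinitely many disjoint translates of a compact set carrying positive $|m|$-mass would make $\|m\|_{1}$ infinite, so $m=0$ a.e., contradicting $\phi_{\mathbf 1}(m)=1$). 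That argument is sound provided you justify $\delta_{x}\ast m=m$ by letting a translated approximate identity act and using continuity of convolution, as you indicate; it buys independence from \cite{ala} at the cost of some measure-theoretic care, while the paper's route is shorter because it delegates precisely this step to the literature.
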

\begin{proof}
	Let $G$ be a compact group. Then each continuous homomorphism $\rho:G\rightarrow \mathbb{T}$ belongs to $L^{\infty}(G)$. On the other hand $L^{\infty}(G)\subseteq L^{1}(G)$. So $\rho\in L^{1}(G).$ Consider	
	\begin{equation*}
	\begin{split}
	f\ast \rho(x)=\int_{G} f(y)\rho(y^{-1}x)dy&=\int_{G} f(y)\rho(y^{-1})\rho(x)dy\\
	&=\rho(x)\int_{G} f(y)\overline{\rho(y )}dy\\
	&=\rho(x)\phi_{\rho}(f).
	\end{split}
	\end{equation*}
	It follows that $f\ast \rho=\phi_{\rho}(f)\rho.$  Also
			\begin{equation*}
	\begin{split}
	\phi_{\rho}(\rho)=\int_{G} \rho(x)\overline{\rho(x)}dy&=\int_{G} \rho(x)\rho(x^{-1})dy\\
	&=\int_{G} 1 dx=1,
	\end{split}
	\end{equation*}

	here we consider the normalized Haar measure on $G$. Thus by \cite[Theorem 2.1]{Nas1} $L^{1}(G)$ is left $\phi_{\rho}$-contractible. So $L^{1}(G)$  is essentially left $\phi$-contractible
	
	Conversely, suppose that $L^{1}(G)$ is essentially left $\phi_{\rho}$-contractible. Since $L^{1}(G)$ has a bounded approximate identity, by \cite[Proposition 2.3]{sad}, essential left  $\phi_{\rho}$-contractibility of $L^{1}(G)$ implies the left   $\phi_{\rho}$-contractibility of $L^{1}(G)$. Applying \cite[Theorem 3.3]{ala} follows that $G$ is compact.
\end{proof}

In the following theorem we show that $ii\Rightarrow iv$ of \cite[Corollary 3.2]{sad} is not valid just only for a finite group $G$. 
Suppose that $G$ is a locally compact group. It is well-known that $L^{2}(G)$ is a Banach algebra with convolution if and only if $G$ is compact. 
\begin{thm}	
Let $G$ be a compact commutative  group. Then $L^{2}(G)$ is essentially left $\phi_{\rho}$-contractible, for each $\phi_{\rho}\in\Delta(L^{2}(G))$.	
\end{thm}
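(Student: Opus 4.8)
The plan is as follows. For each $\rho\in\widehat{G}$ I will show that $m=\rho$ serves as a left $\phi_\rho$-contractibility element of $L^{2}(G)$, and then upgrade this to essential left $\phi_\rho$-contractibility by a direct computation. This last step cannot be deduced from the first theorem of this section, because for infinite $G$ the algebra $L^{2}(G)$ has no bounded approximate identity — via the Plancherel transform it is isometrically $\ell^{2}(\widehat{G})$ with pointwise multiplication, which has none — so the passage from left to essential left $\phi_\rho$-contractibility has to be carried out by hand, and I expect this to be the one place where commutativity of $G$ is genuinely used.

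Since $G$ is compact and abelian, $L^{2}(G)$ is a commutative Banach algebra under convolution and embeds in $L^{1}(G)$ as a dense two-sided ideal; hence every character of $L^{2}(G)$ extends to a character of $L^{1}(G)$, so each $\phi_\rho\in\Delta(L^{2}(G))$ has the form $\phi_\rho(f)=\int_{G}\overline{\rho(x)}f(x)\,dx$ for some $\rho\in\widehat{G}$. Because $\rho$ is bounded and $G$ is compact, $\rho\in L^{2}(G)$, so $f\ast\rho$ lies in $L^{2}(G)$; repeating the computation of the preceding theorem then gives $f\ast\rho=\phi_\rho(f)\,\rho$ for every $f\in L^{2}(G)$, while $\phi_\rho(\rho)=\int_{G}|\rho(x)|^{2}\,dx=1$ for the normalized Haar measure. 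By \cite[Theorem~2.1]{Nas1} this shows that $L^{2}(G)$ is left $\phi_\rho$-contractible, with contractibility element $m=\rho$.

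Finally, let $X$ be a neo-unital Banach $L^{2}(G)$-bimodule with $x\cdot a=\phi_\rho(a)x$, so that the left action on $X^{*}$ is $a\cdot g=\phi_\rho(a)g$, and let $D\colon L^{2}(G)\to X^{*}$ be a continuous derivation. Applying the derivation identity to $m\ast b$ and using $m\cdot D(b)=\phi_\rho(m)D(b)=D(b)$ gives $D(m\ast b)=D(b)+D(m)\cdot b$; on the other hand, commutativity and $b\ast m=\phi_\rho(b)m$ give $m\ast b=\phi_\rho(b)m$ and hence $D(m\ast b)=\phi_\rho(b)D(m)$. Combining the two, $D(b)=\phi_\rho(b)D(m)-D(m)\cdot b=b\cdot x_{0}-x_{0}\cdot b$ with $x_{0}:=D(m)$, so $D$ is inner and $L^{2}(G)$ is essentially left $\phi_\rho$-contractible. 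The only delicate point is the one flagged above: the bounded-approximate-identity shortcut is unavailable, so the explicit inner-derivation computation must be run directly, and the step $m\ast b=b\ast m$ used there is precisely what matches the \emph{right} scalar action in the definition of essential left $\phi$-contractibility with the \emph{left}-sided contractibility element $m$; the rest is a routine transcription of the $L^{1}(G)$ argument.
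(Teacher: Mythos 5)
Your proof is correct, and it takes a genuinely different route from the paper. The paper transports the problem through the Plancherel theorem to $\ell^{2}(\widehat{G})$ with pointwise multiplication, invokes Zhang's example \cite{zhang} for approximate biprojectivity, exhibits the net of finite sums of coordinate vectors as a central approximate identity, and then applies Nemati's Lemma 3.5 \cite{nem homlog} to deduce $\phi_{\rho}$-contractibility, whence essential left $\phi_{\rho}$-contractibility. You instead work directly in $L^{2}(G)$: since $G$ is compact, $\rho\in L^{2}(G)$, the convolution identity $f\ast\rho=\phi_{\rho}(f)\rho$ and $\phi_{\rho}(\rho)=1$ (normalized Haar measure) give an explicit left $\phi_{\rho}$-contractibility element exactly as in the paper's $L^{1}(G)$ theorem, and then a two-line derivation computation finishes the job; your identification of $\Delta(L^{2}(G))$ with $\{\phi_{\rho}:\rho\in\widehat{G}\}$ via the ideal-extension of characters is also sound. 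This is more elementary and self-contained (no Plancherel, no approximate biprojectivity, no appeal to Nemati), and it even yields the explicit element $m=\rho$, which the paper only obtains abstractly. One remark: your stated motivation for the final computation is slightly off. The absence of a bounded approximate identity in $L^{2}(G)$ (which you correctly note for infinite $G$) only obstructs the converse direction, essential left $\phi_{\rho}$-contractibility $\Rightarrow$ left $\phi_{\rho}$-contractibility, as in the first theorem of this section; the forward implication, which is all you need, does not require it — indeed the paper passes from left to essential left $\phi_{\rho}$-contractibility for $L^{1}(G)$ without further argument. Still, with the dual-module formulation of essential left $\phi$-contractibility used in this note, your explicit verification is a legitimate and careful way to close the gap, and your use of commutativity to get $m\ast b=\phi_{\rho}(b)m$ there is exactly where the commutativity hypothesis enters your argument (in the paper it enters through the commutative Plancherel theorem), so nothing is missing.
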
	
\begin{proof}
Let $L^{2}(G)$ be essentially left $\phi_{\rho}$-contractible. It is known by Plancherel Theorem \cite[Theorem 1.6.1]{rud} that $L^{2}(G)$ is isometrically isomorphic to $\ell^{2}(\widehat{G})$, where $\ell^{2}(\widehat{G})$ is equipped with the pointwise multiplication. Zhang in \cite[Example]{zhang} showed that $\ell^{2}(\widehat{G})$ is approximately biprojective, that is, there exists a net $\rho_{\alpha}:\ell^{2}(\widehat{G})\rightarrow \ell^{2}(\widehat{G})\otimes_{p}\ell^{2}(\widehat{G})$ of $\ell^{2}(\widehat{G})$-bimodule morphisms such that $\pi_{\ell^{2}(\widehat{G})}\circ\rho_{\alpha}(a)\rightarrow a,$ for each $a\in \ell^{2}(\widehat{G})$, where $\pi_{\ell^{2}(\widehat{G})}:\ell^{2}(\widehat{G})\otimes_{p}\ell^{2}(\widehat{G})\rightarrow \ell^{2}(\widehat{G})$ is the product morphism given by $\pi_{\ell^{2}(\widehat{G})}(a\otimes b)=ab$ for all $a,b \in\ell^{2}(\widehat{G}). $
 On the other hand suppose that $\Lambda $  is the collection of all  finite subsets of $\widehat{G}$. Clearly with the inclusion $\Lambda$ becomes an ordered set. One can see that $$\{u_{\beta}=\sum_{i\in \beta}e_{i}:\beta\in \Lambda\},$$here $e_{i}$ is an element of $\ell^2(G)$ equal to $1$ at $i$ and $0 $ elsewhere, forms a central approximate identity for $\ell^{2}(\widehat{G})$. Then for each $\phi_{\rho}\in\Delta(L^{2}(G))$ we can find an element $x_{0}\in L^{2}(G) $ such that $ax_{0}=x_{0}a$ and $\phi_{\rho}(x_{0})=1$ for each $a\in L^{2}(G).$ Applying \cite[Lemma 3.5]{nem homlog} follows that $L^{2}(G)$ is     $\phi_{\rho}$-contractible, for each $\phi_{\rho}\in\Delta(L^{2}(G))$. Therefore $L^{2}(G)$ is   essentially left $\phi_{\rho}$-contractible, for each $\phi_{\rho}\in\Delta(L^{2}(G))$.
\end{proof}
\begin{thm}
Let $G$ be an amenable group and $A(G)$ be the  Fourier algebra on $G$. Then $A(G) $ is essentially left $\phi$-contractible if and only if $G$ is discrete.
\end{thm}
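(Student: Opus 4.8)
The plan is to reduce the statement to a characterization of \emph{left} $\phi$-contractibility of $A(G)$, and from there to an elementary topological property of $G$. First I would use amenability of $G$ together with Leptin's theorem to equip $A(G)$ with a bounded approximate identity, so that the corrected version of \cite[Proposition 2.3]{sad} recorded above applies and $A(G)$ is essentially left $\phi$-contractible if and only if it is left $\phi$-contractible. I would then recall that $\Delta(A(G))$ is canonically $G$, every $\phi\in\Delta(A(G))$ being point evaluation $\phi_{x}(u)=u(x)$ for a unique $x\in G$, and that left translations are isometric algebra automorphisms of $A(G)$; hence it suffices to fix $x\in G$ and decide left $\phi_{x}$-contractibility.

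For the ``if'' direction I would assume $G$ discrete, so that $\{x\}$ is open and $m=\mathbf{1}_{\{x\}}\in A(G)$ (with $\|m\|_{A(G)}=1$); then $um=u(x)m$ pointwise for every $u\in A(G)$ and $\phi_{x}(m)=1$, so by the element-wise criterion for left $\phi$-contractibility recalled above $A(G)$ is left $\phi_{x}$-contractible, a fortiori essentially left $\phi_{x}$-contractible.

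For the ``only if'' direction I would assume $A(G)$ essentially left $\phi_{x}$-contractible, obtain via the reduction above an $m\in A(G)$ with $um=u(x)m$ for all $u\in A(G)$ and $m(x)=1$ (taking $u=m$ also shows $m^{2}=m$, so $m$ is a nonzero idempotent), and then exploit regularity of the Banach function algebra $A(G)$ on its spectrum $G$: for $y\neq x$ pick $u\in A(G)$ with $u(x)=0$ and $u(y)=1$, whence $m(y)=u(y)m(y)=(um)(y)=u(x)m(y)=0$. Thus $m=\mathbf{1}_{\{x\}}$, which being continuous (as $A(G)\subseteq C_{0}(G)$) forces $\{x\}=m^{-1}(\mathbb{C}\setminus\{0\})$ to be open, i.e.\ $G$ discrete.

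The only substantial ingredients are the external facts invoked --- Leptin's theorem, the identification $\Delta(A(G))=G$, and the regularity of $A(G)$ --- alongside the corrected \cite[Proposition 2.3]{sad}; modulo these the argument is the short computation above. The point to watch is where amenability is genuinely used: only through Leptin's theorem, that is, to license passing between essential left $\phi$-contractibility and left $\phi$-contractibility --- the construction of $m$ in the discrete case needs no amenability hypothesis. I expect the only (mild) obstacle to be the regularity step pinning down $m=\mathbf{1}_{\{x\}}$; nothing deeper should be required.
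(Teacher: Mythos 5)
Your proposal is correct, and its skeleton coincides with the paper's: amenability of $G$ is used only through Leptin's theorem (\cite[Theorem 7.1.3]{run}) to give $A(G)$ a bounded approximate identity, after which the corrected \cite[Proposition 2.3]{sad} identifies essential left $\phi$-contractibility with left $\phi$-contractibility. Where you diverge is in the second step: the paper simply quotes \cite[Theorem 3.5]{ala} for the equivalence ``$A(G)$ is left $\phi$-contractible $\iff$ $G$ is discrete,'' whereas you re-prove that equivalence from scratch using the element criterion $um=\phi_x(u)m$, $\phi_x(m)=1$ stated earlier in the paper, together with the standard facts $\Delta(A(G))\cong G$, $A(G)\subseteq C_0(G)$, and regularity of $A(G)$ (for the discrete direction, $m=\mathbf{1}_{\{x\}}$ lies in $A(G)$ as a coefficient of the left regular representation, and no amenability is needed, matching the paper's one-line converse). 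The trade-off is clear: the paper's argument is shorter but leans on an external theorem about (abstract Segal algebras in) $A(G)$, while yours is self-contained modulo classical properties of the Fourier algebra, and it makes explicit both where amenability enters and the topological mechanism ($m$ continuous and supported on $\{x\}$ forces $\{x\}$ open) behind discreteness. Both are valid proofs of the theorem.
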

\begin{proof}
	Let $A(G) $ be essentially left $\phi$-contractible. Since $G$ is amenable by Leptin's Theorem (\cite[Theorem 7.1.3]{run}), amenability of $G$ implies that  $A(G)$ has a bounded approximate identity. By \cite[Proposition 2.3]{sad} essentially left $\phi$-contractibility of $A(G)$ gives that $A(G)$ is left $\phi$-contractible. Using \cite[Theorem 3.5]{ala} shows that $G$ is discrete.
	
	Converse is clear by \cite[Theorem 3.5]{ala}
\end{proof}

\end{large}
% ------------------------------------------------------------------------
\end{document}